\theoremstyle{plain}
\newtheorem{theorem}{Theorem}
\newtheorem{lemma}[theorem]{Lemma}
\newtheorem{question}[theorem]{Question}
\newcommand{\lem}[1]{Lemma~\ref{#1}}
\newcommand{\thm}[1]{Theorem~\ref{#1}}
\newcommand{\qn}[1]{Question~\ref{#1}}
\newcommand{\sect}[1]{Section~\ref{#1}}
\newcommand{\defn}{\emph}
\newcommand{\N}{\mathbb{N}}
\newcommand{\Z}{\mathbb{Z}}
\newcommand{\Q}{\mathbb{Q}}
\newcommand{\R}{\mathbb{R}}
\newcommand{\dyadics}{\mathbb{D}}
\newcommand{\sys}{P}
\newcommand{\column}[1]{c^{(#1)}}
\newcommand{\symmetric}{System~$\mathcal{A}$\xspace}
\newcommand{\QvN}{System~$\mathcal{B}$\xspace}
\newcommand{\asymmetric}{System~$\mathcal{C}$\xspace}
\newcommand{\SystemI}{System~$\mathcal{I}$\xspace}
\newtheorem*{symmetricDef}{System $\mathcal{A}$}
\newtheorem*{QvNDef}{System $\mathcal{B}$}
\newtheorem*{asymmetricDef}{System $\mathcal{C}$}
\title{Partition regularity in the rationals}
\author{Ben Barber
        \footnote{Department of Pure Mathematics and Mathematical Statistics,
                  Centre for Mathematical Sciences,
                  Wilberforce Road, Cambridge, CB3 0WB, UK.
                  {\tt b.a.barber@dpmms.cam.ac.uk}}
        \and
        Neil Hindman
        \footnote{Department of Mathematics,
                 Howard University,
                  Washington, DC 20059, USA.
                  {\tt nhindman@aol.com}}
        \thanks{This author acknowledges support received from the National
Science Foundation (USA) via Grant DMS-1160566.}
        \and
        Imre Leader
        \footnote{Department of Pure Mathematics and Mathematical Statistics,
                  Centre for Mathematical Sciences,
                  Wilberforce Road, Cambridge, CB3 0WB, UK.
                  {\tt i.leader@dpmms.cam.ac.uk}}
                  }
\begin{document}

\maketitle

\begin{abstract}
A system of homogeneous linear equations with integer coefficients is
\defn{partition regular} if, 
whenever the natural numbers are finitely coloured,
the system has a monochromatic solution.  The Finite Sums theorem provided the
first 
example of an infinite partition regular system of equations.  Since then,
other such systems of equations have been found, but each can be viewed as a
modification of the Finite Sums theorem.  

We present here a new infinite
partition regular system of equations that appears to arise in a genuinely
different way. This is the first example of a partition regular system in which
a variable occurs with unbounded coefficients. A modification of the system
provides an example of a system that is partition regular over $\Q$ but not
$\N$, settling another open problem. 
\end{abstract}

\section{Introduction}
An \defn{$r$-colouring} of the natural numbers is a partition of $\N = \{1, 2, 3, \ldots\}$ into $r$ parts $A_1, \ldots, A_r$.  Each $A_i$ is a \defn{colour class}, and a set $S \subseteq \N$ is \defn{monochromatic} if $S \subseteq A_i$ for some $i$.

A system of homogeneous linear equations with rational coefficients is \defn{partition regular} over $\N$ if, whenever the natural numbers are finitely coloured, there is some colour class in which the system has a solution.  For example, Schur's theorem states that, whenever the natural numbers are finitely coloured, there is some colour class containing numbers $x$, $y$ and $x+y$; that is, the single equation $x+y=z$ is partition regular.  Partition regularity over $\Z$ or $\Q$ or $\R$ is defined similarly, with the additional restriction that the values taken by the variables must be non-zero.

In 1933 Rado \cite{Rado} characterised finite partition regular systems in terms of a simple property of their matrix of coefficients.  Let $A$ be an $m \times n$ matrix with rational entries and let $\column{1}, \dotsc, \column{n}$ be the columns of $A$.  
We say that $A$ has the \defn{columns property} if there is a partition 
$[n] = I_1 \cup I_2 \cup \dotsb \cup I_s$ of the columns of $A$ such that 
$\sum_{i \in I_1} \column{i} = 0$, and, for each $t$,
\begin{equation*}
 \sum_{i \in I_t} \column{i} \in \langle \column{i} : i \in I_1 \cup \dotsb \cup I_{t-1}\rangle,
\end{equation*}
where $\langle \cdot \rangle$ denotes (rational) linear span and $[n]=\{1,2,\ldots,n\}$.
\begin{theorem}[Rado \cite{Rado}]
 Let $A$ be an $m \times n$ matrix with rational entries.  The system of linear equations $Ax = 0$ is partition regular over $\N$ if and only if $A$ has the columns property.
\end{theorem}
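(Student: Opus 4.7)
The plan is to prove the two implications separately.

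For the sufficiency direction, I would pass through Deuber's notion of an $(m,p,c)$-set. Given parameters $m, p, c \in \N$ and generators $x_1, \dotsc, x_m \in \N$, the associated $(m,p,c)$-set consists of all values $c x_i + \sum_{j > i} \lambda_j x_j$ with $1 \le i \le m$ and $\lambda_{i+1}, \dotsc, \lambda_m \in \{-p,\dotsc,p\}$. The crucial intermediate result is Deuber's theorem: for any $(m,p,c)$ and any finite colouring of $\N$, some colour class contains an entire $(m,p,c)$-set. I would prove this by induction on $m$, using a van der Waerden--style argument together with a colour-focusing step. Granting Deuber's theorem, I would then show that for any $A$ with the columns property there exist $(m,p,c)$, depending on $A$, such that every $(m,p,c)$-set contains a solution to $Ax=0$. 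This is carried out by assigning each variable $x_i$ a value of the form $\sum_j \mu_{ij} y_j$ (where $y_1, \dotsc, y_m$ are $(m,p,c)$-generators) block-by-block along the columns partition $I_1, \dotsc, I_s$: the variables indexed by $I_1$ are all set equal to one generator, and at stage $t$ the relation $\sum_{i \in I_t} \column{i} \in \langle \column{i} : i \in I_1 \cup \dotsb \cup I_{t-1}\rangle$ is used to fix the $\mu_{ij}$ for $i \in I_t$ consistently with the earlier blocks.

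For the necessity direction, I would argue by contraposition: supposing $A$ does not have the columns property, construct a finite colouring of $\N$ with no monochromatic solution. The colouring I would try uses base-$p$ expansions for a large prime $p$: colour each $n \in \N$ by the pair consisting of the last nonzero base-$p$ digit of $n$ and the position of this digit modulo some $M$, where $p$ and $M$ are chosen large in terms of the entries of $A$. Given a putative monochromatic solution, I would group the variables by the lowest power of $p$ appearing in their base-$p$ expansions; this yields a candidate columns partition, and the monochromaticity together with residue arithmetic modulo $p$ forces the required sum-in-span relations, contradicting the failure of the columns property.

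I expect the sufficiency direction to be the main obstacle. Deuber's theorem itself requires a delicate iterated Ramsey argument, and the reduction from the columns property to solving $Ax=0$ inside an $(m,p,c)$-set demands careful bookkeeping to keep every coefficient $\mu_{ij}$ arising in the block-by-block construction within the allowed range $\{-p, \dotsc, p\}$; in particular, the parameters $m, p, c$ must be chosen large enough to absorb the denominators appearing in the span relations provided by the columns property. The necessity direction is more direct once the right colouring is identified, though finding that colouring and extracting the columns partition cleanly from a monochromatic solution still requires some insight.
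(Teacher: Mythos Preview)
The paper does not contain a proof of this theorem. Rado's theorem is stated in the introduction with a citation to \cite{Rado} and is used as a black box later (to handle the first $K-1$ equations of each system via the columns property), but no argument for it is given or sketched anywhere in the paper. So there is nothing here against which to compare your proposal.

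For what it is worth, your outline is a faithful description of the standard modern proof: sufficiency via Deuber's $(m,p,c)$-sets (with Deuber's theorem proved by a van~der~Waerden--type colour-focusing induction) and necessity via a base-$p$ digit colouring that reads off a candidate columns partition from any monochromatic solution. The caveats you flag---controlling the size of the coefficients $\mu_{ij}$ so they fit in $\{-p,\dotsc,p\}$, and choosing $p$ and the modulus $M$ large enough in the necessity colouring---are exactly the places where the real work lies. None of this, however, appears in the present paper, which simply quotes the result.
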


For infinite systems of equations much less is known.  The first example of an infinite partition regular system came from the Finite Sums theorem.

\begin{theorem}[Hindman \cite{Hindman}]\label{finite-sums}
 Whenever the natural numbers are coloured with finitely many colours, there exist $x_1 < x_2 < \cdots$ such that all finite sums $\sum_{i \in I} x_i$ where $I \neq \emptyset$ are the same colour.
\end{theorem}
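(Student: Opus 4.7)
The plan is to use the Galvin--Glazer ultrafilter proof. I would identify $\beta\N$, the Stone--\v{C}ech compactification of $\N$, with the set of ultrafilters on $\N$, topologised so that the sets $\widehat{A} = \{p : A \in p\}$ form a basis of clopen sets. Extend addition to $\beta\N$ by declaring $A \in p + q$ if and only if $\{n \in \N : A - n \in q\} \in p$, where $A - n = \{m \in \N : m + n \in A\}$. The first technical task is to verify that this operation is associative and that every right translation $p \mapsto p + q$ is continuous, so that $(\beta\N, +)$ is a compact Hausdorff right-topological semigroup.

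With these properties in hand, one invokes Ellis's theorem to obtain an idempotent $p = p + p$. Given any finite colouring $\N = A_1 \cup \cdots \cup A_r$, exactly one colour class $A$ lies in $p$. The critical lemma is that when $p + p = p$, the set $A^{\star} = \{n \in A : A - n \in p\}$ also belongs to $p$, and $A^{\star} - n \in p$ for every $n \in A^{\star}$; this follows by unpacking the definitions of the extended sum and of membership in an ultrafilter.

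The sequence $x_1 < x_2 < \cdots$ is then constructed by induction. Having chosen $x_1 < \cdots < x_k$ so that every nonempty sum $\sum_{i \in I} x_i$ with $I \subseteq [k]$ lies in $A^{\star}$, the set
\[
 A^{\star} \cap \bigcap_{\emptyset \neq I \subseteq [k]} \Bigl(A^{\star} - \sum_{i \in I} x_i\Bigr)
\]
is a finite intersection of sets in $p$, and hence lies in $p$. In particular it is infinite, so one may choose $x_{k+1}$ from it exceeding $x_k$. Every nonempty finite sum of the chosen $x_i$'s then lies in $A^{\star} \subseteq A$, completing the induction.

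The main obstacle lies in the opening step: establishing the right-topological semigroup structure on $\beta\N$, and then applying Ellis's theorem on the existence of idempotents in a compact Hausdorff right-topological semigroup. Once this machinery is available, the colouring argument and the inductive construction are essentially formal consequences of the defining property of an idempotent ultrafilter.
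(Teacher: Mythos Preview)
Your outline is correct: this is the standard Galvin--Glazer argument, and the steps you list (right-topological semigroup structure on $\beta\N$, Ellis's theorem yielding an idempotent, the passage from $A$ to $A^{\star}$, and the inductive construction of the $x_i$) are exactly what is needed. One minor point worth tightening is the induction hypothesis: you should maintain not just that each partial sum lies in $A^{\star}$ but that $A^{\star} - \sum_{i\in I} x_i \in p$ for every nonempty $I\subseteq [k]$, which is what makes the displayed intersection belong to $p$; you have essentially said this, but it is the invariant that drives the recursion.

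As for comparison with the paper: there is nothing to compare. The paper does not prove this theorem; it is quoted as a background result with a citation to Hindman's original paper, and the Galvin--Glazer proof is mentioned only in passing (attributed to Galvin and Glazer via Comfort) as the template for the Deuber--Hindman theorem. So your approach is not merely consistent with the paper, it is precisely the proof the authors allude to when they refer to ``the ultrafilter proof of the Finite Sums theorem''.
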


(Formally, the partition regular system associated with \thm{finite-sums} consists of all equations $\sum_{i \in I} x_i = y_I$, where $\{y_I\}$ is a new set of variables indexed by the non-empty finite subsets of $\N$.)

\thm{finite-sums} has been generalised in various ways.  Two examples follow.

\begin{theorem}[Milliken \cite{Milliken} and Taylor \cite{Taylor}]\label{milliken-taylor}
 Whenever the natural numbers are finitely coloured there exist $x_1 < x_2 < \cdots$ such that all finite sums $\sum_{i \in I} x_i + \sum_{j \in J} 2x_j$, where $I, J \neq \emptyset$ and $\max I < \min J$, are the same colour.
\end{theorem}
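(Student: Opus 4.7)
The plan is to adapt the idempotent-ultrafilter method that proves the Finite Sums theorem. Work in the Stone--\v{C}ech compactification $\beta\N$, a compact right-topological semigroup under the continuous extension of addition; by Ellis's lemma it contains a minimal idempotent $p = p + p$. The doubling map $n \mapsto 2n$ also extends continuously, giving the pushforward $2p$ and the composite $q = p + 2p$, which encodes the coefficient pattern $(1,2)$. Given any $r$-colouring of $\N$, exactly one colour class $A$ lies in $q$; the task is to build $x_1 < x_2 < \cdots$ so that every Milliken--Taylor sum $\sum_{i \in I} x_i + 2\sum_{j \in J} x_j$ (with $I, J$ nonempty and $\max I < \min J$) is in $A$.

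Unwinding the definition, $A \in q$ means $\{n : \{m : n + 2m \in A\} \in p\} \in p$, which seeds the induction. For the inductive step, having chosen $x_1 < \cdots < x_n$ so that every Milliken--Taylor sum with indices in $[n]$ already lies in $A$, I would pick $x_{n+1}$ from the intersection of finitely many sets: (i) $\{m : u + 2m \in A\}$ for each partial sum $u = \sum_{i \in I} x_i$ with $\emptyset \neq I \subseteq [n]$ (covering the case $J = \{n+1\}$); (ii) $\{m : v + 2m \in A\}$ for each already-formed Milliken--Taylor sum $v$ (extending an existing $J$-sum by $x_{n+1}$); and (iii) reserve sets that preserve $p$-largeness of continuations at stage $n+1$. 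Iterating $p + p = p$, so that $\{x \in B : -x + B \in p\} \in p$ for every $B \in p$, handles the pure translations, while the definitional unwinding of $A \in q$ handles the doubling conditions; since $p$ is closed under finite intersections, each such intersection is in $p$, and in particular contains some $x_{n+1} > x_n$.

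The main obstacle is precisely this interaction between addition and doubling. A set $\{m : u + 2m \in A\}$ is the preimage under doubling of the translate $-u + A$, so its $p$-largeness is equivalent to $-u + A \in 2p$, not $-u + A \in p$; this is why the composite ultrafilter $q = p + 2p$ is the right vehicle to carry the joint largeness. The subtlety is that $q$ is not itself idempotent (as $2p \neq p$), so the argument cannot simply iterate within a single ultrafilter. The bulk of the technical work lies in combining, stage by stage, the $p$-translation largeness arising from $p = p + p$ with the pushforward structure of $2p$, to verify that the finitely many conditions at each stage do simultaneously cut out a $p$-large set of candidates for $x_{n+1}$.
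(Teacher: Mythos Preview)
The paper does not actually prove this theorem; it is quoted as background with citations to \cite{Milliken} and \cite{Taylor}. The only remark on method is the sentence ``The Milliken--Taylor theorem can be proved by mimicking the proof of Ramsey's theorem, replacing appeals to the pigeonhole principle with appeals to the Finite Sums theorem.'' So there is no proof in the paper to match against, but your proposal does take a genuinely different route from the one the paper alludes to: a combinatorial Ramsey-style iteration (using Theorem~\ref{finite-sums} as a black box at each stage) versus your algebraic approach in $\beta\N$ via the composite ultrafilter $q = p + 2p$. Both are standard and correct. The combinatorial argument is more elementary once one has the Finite Sums theorem in hand; your ultrafilter argument is more structural and generalises cleanly to the full Milliken--Taylor theorem for an arbitrary coefficient pattern $(a_1,\ldots,a_k)$ by taking $q = a_1 p + \cdots + a_k p$.

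One point in your sketch that deserves tightening: to extend an existing $J$-block by $x_{n+1}$ you need $\{m : v + 2m \in A\} \in p$, i.e.\ $-v + A \in 2p$, for every already-formed Milliken--Taylor sum $v$, and this must be carried as an inductive invariant, not just checked once. That is maintained using the idempotence of $2p$ (doubling is a semigroup homomorphism, so $2p + 2p = 2(p+p) = 2p$), which your discussion does not make explicit; without it, step~(ii) does not obviously close up at the next stage.
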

This is the `$1,2$' version of Milliken and Taylor's result; there are corresponding versions for any finite string of natural numbers.

\begin{theorem}[Deuber-Hindman \cite{Deuber-Hindman}]\label{deuber-hindman}
For any sequence $\sys_1, \sys_2, \ldots$ of finite partition regular systems of equations, whenever the natural numbers are finitely coloured there is a sequence of corresponding solution sets $S_1, S_2, \ldots$ such that all finite sums of the form $\sum_{i \in I} x_i$, where $I \neq \emptyset$ and $x_i \in S_i$ for
all $i \in I$, are the same colour.
\end{theorem}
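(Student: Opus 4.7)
The natural route is through the algebra of $\beta\N$, the Stone--\v{C}ech compactification of $\N$. Fix a minimal idempotent $p$ in $(\beta\N,+)$, and let $A$ be the colour class with $A\in p$. I would construct inductively, for $n\geq 1$, a finite solution set $S_n$ to $\sys_n$ and a set $B_n\in p$ (with $B_0:=A$), maintaining the invariants that $B_n+F_n\subseteq A$ and that every non-zero element of $F_n$ lies in $A$, where
\begin{equation*}
F_n:=\{0\}\cup\Bigl\{\textstyle\sum_{i\in I}x_i:\emptyset\neq I\subseteq[n],\,x_i\in S_i\Bigr\}.
\end{equation*}
The conclusion of the theorem is immediate from the second invariant.

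For the induction step, assume the invariants hold at stage $n-1$. Since $B_{n-1}\subseteq-s'+A$ for each $s'\in F_{n-1}$, each translate $-s'+A$ is in $p$, and then by idempotence of $p$ the starred set $(-s'+A)^{\star}:=\{x:-x+(-s'+A)\in p\}$ is in $p$ as well. The intersection $D_{n-1}:=B_{n-1}\cap\bigcap_{s'\in F_{n-1}}(-s'+A)^{\star}$ is a finite intersection of sets in $p$ and hence in $p$; since it belongs to a minimal idempotent, $D_{n-1}$ is a \emph{central} set. Invoking the key fact that every central set contains a solution to every finite partition regular system, I extract $S_n\subseteq D_{n-1}$ solving $\sys_n$. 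The new partial sums $s'+x$ with $s'\in F_{n-1}$, $x\in S_n$ satisfy $s'+x\in s'+B_{n-1}\subseteq A$, and the relation $-x+(-s'+A)\in p$ rearranges to $-(s'+x)+A\in p$, so $B_n:=\bigcap_{s\in F_n}(-s+A)$ is a finite intersection of sets in $p$ and satisfies $B_n+F_n\subseteq A$.

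The principal obstacle is the auxiliary fact invoked in the induction step: every central set contains a solution to every finite partition regular system. This is a genuine strengthening of Rado's theorem and is usually obtained by combining the columns-property criterion with the combinatorial largeness of central sets (for instance their piecewise syndeticity, or more powerfully the Central Sets Theorem). Once this is granted, the rest of the argument is a formal iteration of idempotent ultrafilter manipulations. A more elementary alternative avoids $\beta\N$ entirely and iterates a quantitative form of Deuber's $(m,p,c)$-set theorem, picking parameters at each stage large enough to accommodate both $\sys_n$ and the growing family of partial sums from earlier stages; the bookkeeping is fiddlier but the essential input --- bridging the finite (Rado) and infinite (Hindman) partition-regular structures --- is the same.
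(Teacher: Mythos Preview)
The paper does not prove this theorem; it is cited from \cite{Deuber-Hindman} with only the remark that it ``can be proved by modifying the ultrafilter proof (due to Galvin and Glazer) of the Finite Sums theorem.'' Your argument is precisely such a modification---the Galvin--Glazer idempotent iteration with the choice of a single element replaced by the extraction of a whole solution set $S_n$ from a central set---and it is correct, with the one genuine external input (that every central set contains a solution to every finite partition regular system) properly isolated.
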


Here a \defn{solution set} for a system $\sys$ is the set of values taken by the variables in some 
solution to $\sys$.  For example, the $\sys_i$ could be chosen such that each $S_i$ contains
an arithmetic progression of length $i$. 

There is no known classification of infinite partition regular systems: in fact, Theorems \ref{finite-sums}--\ref{deuber-hindman}, together with some
other systems in \cite{central}, are almost everything that is known. 

The Milliken-Taylor theorem can be proved by mimicking the proof of Ramsey's theorem, replacing appeals to the pigeonhole principle with appeals to the Finite Sums theorem.  The Deuber-Hindman theorem can be proved by modifying the ultrafilter proof (due to Galvin and Glazer---see \cite{Comfort}) of the Finite Sums theorem.  All known infinite partition regular systems until now have been strongly related to the Finite Sums theorem: it is therefore of interest to determine whether all infinite partition regular systems must be related to the Finite Sums theorem in some way.

The systems associated with Theorems \ref{finite-sums}--\ref{deuber-hindman} 
(and also the systems in \cite{central}) each have the property that every variable appears with only a bounded set of coefficients (although the set of all coefficients can be unbounded in the case of the Deuber-Hindman theorem).  This led to the following question in Hindman, Leader and Strauss's survey of open problems in partition regularity \cite{problem-survey}.
\begin{question}[\cite{problem-survey}] \label{q:boundedness}
 Are there infinite partition regular systems in which some variable appears with an unbounded set of coefficients?
\end{question}

A small amount of care is clearly needed in interpreting this question to avoid  trivialities. For example, we do not want to allow a system formed by taking many integer multiples of one fixed equation (e.g.~the system $x+y=z$, $2x+2y=2z$, \ldots). The obvious way to formalise this is to ask that there is some variable $x$ such that the ratio of the coefficient of $x$ to the sum of the other coefficients in each equation is unbounded.%
\footnote{We mention in passing that one cannot formalise the question by an alternative approach of the form `cannot be rewritten such that \ldots'. This is because {\it any} system of equations can be rewritten so that every variable appears in only finitely many equations.  (This is an easy exercise in Gaussian elimination.)}

The first result of this paper is that there {\it are} such systems. This is perhaps somewhat surprising. Indeed, for just about any system of equations that one writes down with a variable having unbounded coefficients, it is usually possible to find a colouring of $\N$ without a monochromatic solution. See \cite{forb} for several examples of this.

Roughly speaking, our idea is to try to have a variable ($y$ below) that will appear, with unbounded coefficients, as the difference between two different expressions (that do not involve $y$). Now, it is well known that a system of equations such as $y=x_1-z_1, \ 2y=x_2-z_2, \ 3y=x_3 -z_3, \ldots$ is not partition regular (see for example \cite{forb} for this and some considerable strengthenings). So we seek to replace the $x_i$ and $z_i$ by some expressions with more `flexibility'. One way to do this is to allow some sums instead.  Here is a system that arises in this way.

\begin{symmetricDef}
 \begin{align*}
                      x_{1,1} + 2y & = z_{1,1} \\
            x_{2,1} + x_{2,2} + 4y & = z_{2,1} + z_{2,2} \\
                                   & \mathrel{\makebox[\widthof{=}]{\vdots}} \\
x_{n,1} + \cdots + x_{n,n} + 2^n y & = z_{n,1} + \cdots + z_{n,n}\\
                                   & \mathrel{\makebox[\widthof{=}]{\vdots}}
 \end{align*}
\end{symmetricDef}

In \sect{sec:symmetric} we show that \symmetric is partition regular.
In fact, our proof gives more than this: we show that \symmetric remains partition regular if the coefficients $2^n$ are replaced by any other sequence of integer coefficients.
Curiously, our argument involves density considerations, even though there is
no `density version' of this statement---for example, the set of numbers that
are congruent to 1 modulo 3 does not contain any solution to \symmetric.

Let us now turn to a consideration of partition regularity over different
spaces. A system of equations which is 
partition regular over $\N$ is clearly partition regular over $\Z$, and it is
easy to see that the converse holds as well. (Given a bad colouring of
$\N$ with $k$ colours, we obtain a bad colouring of $\Z$ with $2k$ colours 
by `reflecting' this
colouring to the negative numbers using $k$ new colours.) 

If a system is
partition regular over $\N$ then it is certainly partition regular over $\Q$,
and similarly if it is partition regular over $\Q$ then it is partition
regular over $\R$. What about the converses? It is known 
(see \cite{problem-survey}) that the reals are `richer' than the rationals:
there are systems that are partition regular over $\R$ but not over $\Q$.
But the question of whether or not $\N$ and $\Q$ can be distinguished remained
open:

\begin{question}[\cite{problem-survey}] \label{q:QvN}
 Are there infinite systems that are partition regular over $\Q$ but not 
over $\N$?
\end{question}

Here we show that a modification to \symmetric provides an example of such
a system. Again, this is rather unexpected: for all known examples of systems 
not partition regular over $\N$ there were easy extensions of the bad 
colourings
to bad colourings of $\Q$, using `factorial base' and similar ideas (see for
example \cite{forb}
and \cite{problem-survey}). So the general belief was that the answer to
\qn{q:QvN} was probably negative.

\begin{QvNDef}
 \begin{align*}
                    x_{1,1} + 2^{-1}y & = z_{1,1} \\
          x_{2,1} + x_{2,2} + 2^{-2}y & = z_{2,1} + z_{2,2} \\
                                      & \mathrel{\makebox[\widthof{=}]{\vdots}} \\
x_{n,1} + \cdots + x_{n,n} + 2^{-n} y & =z_{n,1} + \cdots + z_{n,n}\\
                                      & \mathrel{\makebox[\widthof{=}]{\vdots}}
 \end{align*}
\end{QvNDef}

In \sect{sec:QvN} we show that \QvN is partition regular over $\Q$.  Since \QvN does not even have any solutions over $\N$, this answers \qn{q:QvN}.

The argument for \symmetric uses the symmetry of the system (that there are equal
numbers of $x$'s and $z$'s on each side of the equations). In \sect{sec:asymmetric} we 
modify the argument to show that the following less symmetric system is also partition regular.

\begin{asymmetricDef}
 \begin{align*}
                      x_{1,1} + 2y & = z_1 \\
            x_{2,1} + x_{2,2} + 4y & = z_2 \\
                                   & \mathrel{\makebox[\widthof{=}]{\vdots}} \\
x_{n,1} + \cdots + x_{n,n} + 2^n y & = z_n\\
                                   & \mathrel{\makebox[\widthof{=}]{\vdots}}
 \end{align*}
\end{asymmetricDef}

This fact solves a problem in image partition regularity from \cite{DH}, namely
`if a system is image partition regular over $\N$, does it follow that we
can always find a monochromatic solution in the reals with all values as small as we
please?' We give the details in \sect{sec:asymmetric}.

%%%%%%%%%%%%%%%%%%%%%%%%%%%%%%%%%%%%%%%%%%%%%%%%%%%%%%%%%%%%%%%%%%%%%%%%%%%%%%%%%%%%%%%%

\section{\symmetric}\label{sec:symmetric}

\symmetric has a solution in colour class $A$ if and only if there is some $y \in A$ with
 \[
   2^n y \in nA-nA
 \]
for every $n \in \N$, where we use the standard notation for sumsets and difference sets
\begin{align*}
 A + B & = \{a + b : a \in A, b \in B\} \\
 A - B & = \{a - b : a \in A, b \in B\} \\
      kA & = \underbrace{A + \cdots + A}_{k \text{ times}}.
\end{align*}

We expect iterated sumsets and difference sets to have some additive structure, so this trivial rewriting of \symmetric suggests that a sensible first question might be `what kind of structure can we find inside $nA-nA$ when $n$ is large?'

We shall need a notion of density.  For a subset $S$ of $\N$, its (\defn{upper}) \defn{density} is
\[
 d(S) = \limsup_{n \to \infty} \frac{|S \cap [n]|}{n}.
\]
For a subset $S$ of $\Z$ we write $d(S) = d(S \cap \N)$ (that is, we measure its density `in one direction').  We call $S$ \defn{dense} if $d(S) > 0$.  Density has the following properties:
\begin{itemize}
 \item If $A \subseteq B$, then $d(A) \leq d(B)$.
 \item For any $A$ and $B$, $d(A\cup B) \leq d(A) + d(B)$.  In particular, whenever $\N$ is finitely coloured, at least one colour class is dense.
 \item If $A + x_1, \ldots, A+x_k$ are disjoint translates of a set $A$, then
\[
 d\left(\,\bigcup_{i=1}^k \,(A+ x_i)\right) = kd(A).
\]
  This is because $A$ and $A+x_i$ are dense in roughly the same intervals.  (It is not true in general that if $A$ and $B$ are disjoint then $d(A \cup B) = d(A) + d(B)$: indeed, it is easy to construct infinitely many pairwise disjoint sets that each have density $1$ by having them `take turns' to have density close to $1$ on initial segments of $\N$.)
\end{itemize}

For dense sets things work as well as we could hope for.  Write $m \cdot S=\{ms:s\in S\}$ for the set obtained from $S$ by pointwise multiplication by $m$.  The following is a slight generalisation of a result of Stewart and Tijdeman \cite{Stewart-Tijdeman}, whose argument covers the case where $k$ is a power of $2$.

\begin{lemma}\label{eventually-symmetric}
 Let $S$ be a dense, symmetric subset of $\Z$ containing 0.  Then there is 
an $m \in\N$ such that, for $k\geq 2/d(S)$, $kS = m\cdot\Z$.

In particular, if $A$ is a dense subset of $\N$, then there is an $m \in\N$ such that, for
$k\geq 2/d(A)$, $kA - kA = m\cdot\Z$.
\end{lemma}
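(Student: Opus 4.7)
My plan is to handle the two claims in order. The ``in particular'' clause follows from the general statement applied to $S = A - A$: this $S$ is symmetric, contains $0$, and has $d(S) \geq d(A) > 0$ because $S \supseteq A - a$ for any fixed $a \in A$, a translate of $A$ into $\N$. Moreover $kA - kA = k(A-A) = kS$ (rearrangement of summands), and $2/d(S) \leq 2/d(A)$, so the general statement yields the second claim. For the general statement, I first reduce to the case $\gcd(S) = 1$: set $m = \gcd(S)$, $T = \{s/m : s \in S\}$. Then $T$ is symmetric, contains $0$, satisfies $\gcd(T) = 1$, and $d(T) = m \cdot d(S)$. Since $kS = m(kT)$, the conclusion $kS = m\Z$ is equivalent to $kT = \Z$, and $k \geq 2/d(S) = 2m/d(T) \geq 2/d(T)$, so the rescaled hypothesis is automatic. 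So it suffices to prove: if $S$ is symmetric, $0 \in S$, $\gcd(S) = 1$, and $d(S) = \alpha > 0$, then $kS = \Z$ for $k \geq 2/\alpha$.

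For the reduced claim, the strategy is a density-increment via sumsets. The key observation is that $n \in kS$ if and only if $S$ meets the translate $n - (k-1)S$, and by the property of density stated just before the lemma, if the densities of two subsets of $\Z$ sum to more than $1$ then they must intersect in $\N$. So I would aim to show that $(k-1)S$ has density at least $1 - \alpha$ whenever $k \geq 2/\alpha$. For this, the plan is to iterate a Kneser/Pl\"unnecke-type lower bound for the density of a sumset: each additional copy of $S$ raises the density by (essentially) $\alpha$, provided the stabilizer of the current sumset in $\Z$ is trivial---which in turn holds as long as the sumset is a proper subset of $\Z$, because $\gcd(S) = 1$ forces $\bigcup_k kS = \Z$. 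Iterating this from $S$ to $(k-1)S$ gives the required density bound, since $(k-1)\alpha \geq 2 - \alpha > 1 - \alpha$ when $k \geq 2/\alpha$.

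The main obstacle I foresee is bookkeeping the exact constant. Two subtleties need care: (i) the density used in the hypothesis is upper density in $\N$, while the cleanest sumset inequalities are stated for upper Banach density on $\Z$, and one needs to verify these notions line up for symmetric $S$; (ii) getting the sharp factor of $2$ (rather than, say, $1$) in the threshold $2/\alpha$---this factor should come from the symmetry of $S$, since the half of $S$ lying in $\N$ already has density $\alpha$, and the symmetric half in $-\N$ contributes equally. Provided both points can be pushed through, the density increment argument closes in exactly $2/\alpha$ many steps and yields $kS = \Z$ as desired.
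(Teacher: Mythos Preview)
Your reduction of the ``in particular'' clause to the main statement via $S=A-A$ is fine and matches the paper, and the further reduction to $\gcd(S)=1$ is a harmless normalisation. The problem is in the core argument for the reduced claim.

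The step ``if the densities of two subsets of $\Z$ sum to more than $1$ then they must intersect in $\N$'' is false for upper density. Indeed, the paper explicitly warns, just above the lemma, that upper density is \emph{not} additive on disjoint sets: one can build disjoint $A,B\subseteq\N$ each of upper density arbitrarily close to $1$. So from $d(S)+d\big(n-(k-1)S\big)>1$ you cannot conclude $S\cap\big(n-(k-1)S\big)\neq\emptyset$. For the same reason, the Kneser-type increment $d\big((j+1)S\big)\geq d(jS)+d(S)$ that you plan to iterate does not hold for upper density in general; your acknowledged obstacle (i) is not a bookkeeping issue but a genuine failure of the method. Symmetry of $S$ does not rescue this: symmetry only tells you about $-\N$, not about how $S$ is distributed along different scales of $\N$, which is exactly what makes upper density badly behaved. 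To run an intersection-by-density argument you would need \emph{lower} density control on $(k-1)S$, and nothing in your outline produces that.

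The paper's proof is engineered precisely to avoid this trap. It uses only the one property of upper density that \emph{does} hold exactly: a union of disjoint \emph{translates} of a single set has density equal to the number of translates times the density of the set. The argument shows that if the chain $S\subseteq 2S\subseteq 3S\subseteq\cdots$ fails to stabilise by step $2/d(S)$, then for each $j\leq 1/d(S)$ one can locate $j+1$ pairwise disjoint translates of $S$ inside $(2j+1)S$, forcing $d\big((2j+1)S\big)\geq (j+1)\,d(S)>1$ for $j=\lfloor 1/d(S)\rfloor$, a contradiction. Once the chain stabilises at some $X=kS$, closure under addition and negation make $X$ a subgroup of $\Z$. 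This disjoint-translates mechanism is the missing idea in your proposal.
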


\begin{proof}  We will show that there is some $j \leq 1/d(S)$ such that
$(2j+1)S=(2j)S$, and so $(k+1)S=kS$ for $k\geq 2 / d(S)$.  Once we have shown this, let $k = \lceil 2/d(S) \rceil$ and let $X = kS$.
We have that $X + X = kS + kS = (2k)S = X$.  Since $S$ is symmetric, we also have that $X=-X$, 
so $X$ is closed under addition and the taking of inverses, 
hence is a subgroup of $\Z$ as required.

Suppose instead that for each $j \leq 1/d(S)$,
$(2j)S\subsetneq (2j+1)S$. We claim that, for each such $j$,
$(2j+1)S$ contains $j+1$ disjoint translates of $S$.  
This is a contradiction for $j=\lfloor1/d(S)\rfloor$.

The claim is true for $j=0$.  For $j>0$, choose $x \in (2j+1)S \setminus (2j)S$.
Then $x=s_1 + \cdots + s_{2j+1}$ with $s_i\in S$ for each $i$.  We have
\begin{align*}
 S + x - s_1 = S + s_2 + \cdots + s_{2j+1} & \subseteq (2j+1)S \\
 \hbox{and }(2j-1)S - s_1 \subseteq (2j)S & \subseteq (2j+1)S.
\end{align*}
Hence it suffices to show that $S + x - s_1$ and $(2j-1)S - s_1$ are disjoint.  
But if they intersect then $t_0 + x - s_1 = t_1 + \cdots + t_{2j-1} - s_1$ for some $t_i \in S$, 
from which it follows that $x = t_1 + \cdots + t_{2j-1} - t_0 \in (2j)S$, contradicting the choice of $x$.

For the `in particular' statement, note that $d(A-A)\geq d(A)$.
\end{proof}

Given a colour class $A$, let $m$ be as above.  If there is a $y \in A$ with $y$ divisible by $m$ then \lem{eventually-symmetric} tells us that we can solve \symmetric `eventually' inside $A$.  That leaves only finitely many equations unsolved; we hope to solve these using Rado's theorem.  The problem is that the colour class obtained from Rado's theorem might not be dense.  To avoid this situation we work inside a long homogeneous arithmetic progression that is disjoint from the non-dense colour classes.

\begin{lemma} \label{long-homogeneous-AP}
 Let $\N = A \cup B$ where $d(B) = 0$.  Then, for any $l\in\N$, there is a $d \in \N$ such that $d \cdot [l] \subseteq A$.
\end{lemma}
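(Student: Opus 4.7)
The plan is to find a good $d$ by a simple density argument: I want to show that the set of $d \in \N$ for which $d \cdot [l] \not\subseteq A$ has upper density $0$, which will imply in particular that its complement is nonempty.

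Call a $d \in \N$ \emph{bad} if $id \in B$ for some $i \in [l]$, and \emph{good} otherwise; note that $d$ is good exactly when $d \cdot [l] \subseteq A$. For each $i \in [l]$ let $B_i = \{d \in \N : id \in B\}$, so the set of bad $d$'s is $\bigcup_{i=1}^l B_i$.

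The first step is to bound $d(B_i)$. Since $d \in B_i \cap [n]$ forces $id \in B \cap [in]$, we have
\[
 \frac{|B_i \cap [n]|}{n} \leq \frac{|B \cap [in]|}{n} = i\cdot\frac{|B \cap [in]|}{in}.
\]
The hypothesis $d(B)=0$ means $\limsup_{N\to\infty}|B\cap[N]|/N = 0$, which (as the quantities are non-negative) is the same as $|B\cap[N]|/N \to 0$. Hence the right-hand side tends to $0$, giving $d(B_i)=0$.

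The second step is subadditivity: $d\left(\bigcup_{i=1}^l B_i\right) \leq \sum_{i=1}^l d(B_i) = 0$. Thus the bad set has upper density $0$, and in particular there is some (indeed, an upper-density-$1$ set of) good $d$, for which $d\cdot[l] \subseteq A$. There isn't really a hard step here; the lemma is essentially a direct unpacking of the definition of density together with the fact that dilating a density-$0$ set by $1/i$ preserves density $0$.
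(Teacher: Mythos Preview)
Your proof is correct, but it takes a somewhat different route from the paper's. The paper fixes a single $n$ with $|B\cap[n]|<n/(2l^2)$, considers the $D=\lfloor n/l\rfloor$ candidate progressions $d\cdot[l]\subseteq[n]$, and uses a double-counting argument (each point of $[n]$ lies in at most $l$ of these progressions) to show that if every progression met $B$ then $|B\cap[n]|\geq D/l\geq n/(2l^2)$, a contradiction. Your argument instead works directly with densities: you decompose the set of bad $d$ as $\bigcup_{i=1}^l B_i$, observe that each $B_i$ is a dilation of a subset of $B$ and hence has upper density $0$ (crucially using that $d(B)=0$ forces $|B\cap[N]|/N\to 0$, not merely $\liminf=0$), and then invoke subadditivity. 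Your approach is cleaner and yields the stronger qualitative conclusion that the good $d$'s have upper density $1$; the paper's argument is more hands-on and gives an explicit location for a good $d$ (namely $d\leq \lfloor n/l\rfloor$ for a specific $n$), which is not needed here but could matter in a quantitative setting.
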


\begin{proof}
Since $B$ has density 0 there is an $n \geq l$ such that 
$|B \cap [n]| < \displaystyle\frac{n}{2l^2}$.  Let 
$D = \lfloor n/l \rfloor \geq\displaystyle \frac{n}{2l}$ and let $E = \bigcup_{d=1}^D (d \cdot [l]) \subseteq [n]$.  We claim that $d \cdot [l] \subseteq A$ for some $d \leq D$.  Indeed, suppose not.  Then, for every $d\leq D$, $d \cdot [l]$ contains an element of $B$.  Each element of $E$ is in at most $l$ of the sets $d \cdot [l]$, so
\[
 |B \cap [n]| \geq |B \cap E| \geq \frac{D}{l} \geq \frac{n}{2l^2},
\]
contradicting the choice of $n$.
\end{proof}

We can now show that \symmetric is partition regular.

\begin{theorem} \label{symmetric-case}
 Let $\N = A_1 \cup \dotsb \cup A_r$ be an $r$-colouring of $\N$.  Then there is a colour class $A_i$ containing a solution to \symmetric.
\end{theorem}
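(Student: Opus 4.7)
The plan is to split the colour classes into dense and zero-density parts, dispose of all but finitely many equations of \symmetric via \lem{eventually-symmetric}, and handle the remaining finite truncation by Rado's theorem inside a long homogeneous arithmetic progression supplied by \lem{long-homogeneous-AP}.

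Concretely, reorder so that $A_1,\dotsc,A_s$ are exactly the dense classes and set $B=A_{s+1}\cup\dotsb\cup A_r$, so $d(B)=0$ (and $s\geq 1$ by subadditivity of density). For each $i\leq s$, \lem{eventually-symmetric} supplies $m_i,K_i\in\N$ with $kA_i-kA_i=m_i\cdot\Z$ for all $k\geq K_i$. Set $K=\max_i K_i$ and $M=\mathrm{lcm}(m_1,\dotsc,m_s)$. The first $K-1$ equations of \symmetric form a finite system whose coefficient matrix has the columns property---take $I_1$ to be all the $x$'s and $z$'s (their columns cancel row by row) and $I_2=\{y\}$ (whose column trivially lies in the span of $I_1$)---so by Rado's theorem the truncation is partition regular, and by compactness there exists $N\in\N$ such that every $s$-colouring of $[N]$ contains a monochromatic solution of it.

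Apply \lem{long-homogeneous-AP} with $l=MN$ to obtain $d\in\N$ with $d\cdot[MN]\subseteq A_1\cup\dotsb\cup A_s$; in particular the progression $dM\cdot[N]$ lies entirely in the dense classes. Pull the induced colouring of $dM\cdot[N]$ back to an $s$-colouring of $[N]$ via $k\mapsto dMk$ and apply Rado to extract a monochromatic solution $(y',x_{n,i}',z_{n,i}')$ of the truncation in $[N]$. Rescaling by $dM$ (permissible because the system is homogeneous) yields values $y=dMy'$, $x_{n,i}=dMx_{n,i}'$, $z_{n,i}=dMz_{n,i}'$ that all lie in a single dense class $A_{i^*}$, satisfy the first $K-1$ equations of \symmetric, and have $y\in dM\cdot\N$---so $y$ is divisible by $m_{i^*}$. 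For every $n\geq K\geq K_{i^*}$ we then have $2^n y\in m_{i^*}\cdot\Z=nA_{i^*}-nA_{i^*}$, which unpacks to values $x_{n,1},\dotsc,x_{n,n},z_{n,1},\dotsc,z_{n,n}\in A_{i^*}$ solving the $n$-th equation; stitching everything together produces a full monochromatic solution in $A_{i^*}$.

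The main subtlety is forcing $y$ itself (not merely a scalar multiple of $y$) to be simultaneously divisible by $M$ \emph{and} in the same dense class as the other variables. Running Rado inside the progression $dM\cdot[N]$ achieves both at once: homogeneity of the system permits scaling by $dM$ without leaving the solution set, and the choice of $d$ from \lem{long-homogeneous-AP} keeps every variable---including $y$---inside the dense part of the colouring.
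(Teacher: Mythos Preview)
Your proof is correct and follows essentially the same route as the paper's: apply \lem{eventually-symmetric} to each dense class to get $m_i,K_i$, take $M=\operatorname{lcm}(m_i)$ and $K=\max K_i$, verify the columns property for the first $K-1$ equations to invoke Rado with a compactness bound $N$, and then use \lem{long-homogeneous-AP} to place a copy of $M\cdot[N]$ inside the dense classes so that the monochromatic Rado solution has $y$ divisible by the relevant $m_{i^*}$. The only cosmetic differences are that you use an $s$-colouring (restricting to the dense classes) where the paper uses an $r$-colouring, and you spell out the columns-property check and the homogeneity rescaling explicitly.
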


\begin{proof}
 For each dense colour class $A_i$, apply \lem{eventually-symmetric} to obtain $m_i$ and $K_i$ such that, for $k \geq K_i$, $kA_i - kA_i = m_i \cdot \Z$.  Let $m$ be the least common multiple of the $m_i$, and let $K$ be the maximum of the $K_i$.  For every dense colour class $A_i$, and $k \geq K$, we have $kA_i - kA_i \supseteq m \cdot \Z$.
 
Write $P$ for the system consisting of the first $K-1$ equations of \symmetric.  It is easy to check that $P$ satisfies the columns property, so by Rado's theorem it is partition regular.  It follows that there exists an $l$ such that whenever $[l]$, or any progression $c \cdot [l]$, is $r$-coloured it contains a monochromatic solution to $P$.

Apply \lem{long-homogeneous-AP} to get $d$ with $d \cdot [ml]$ disjoint from the non-dense colour classes.  Then $md \cdot [l] \subseteq d \cdot [ml]$ is also disjoint from the non-dense colour classes, and by the choice of $l$ there is a dense colour class $A_i$ such that $A_i \cap \left(md \cdot [l]\right)$ contains a solution
\[
 \left\{y, x_{1,1}, \dotsc, z_{K-1,K-1}\right\}
\]
to $P$, where every element of the solution set is divisible by $m$. Then, for $k \geq K$, $2^k y \in kA_i - kA_i$, so this solution to $P$ can be extended to a solution for \symmetric inside $A_i$.
\end{proof}

Our proof used only that the coefficients $2^n$ were integers to find a solution to \symmetric inside some dense colour class.  In the next section we show that we can allow rational coefficients if we colour $\Q$ instead of $\N$.

%%%%%%%%%%%%%%%%%%%%%%%%%%%%%%%%%%%%%%%%%%%%%%%%%%%%%%%%%%%%%%%%%%%%%%%%%%%%%%%%%%%%%%%%

\section{\QvN} \label{sec:QvN}

\QvN is not partition regular over $\N$ because it has no solutions over $\N$.  In this section we show that \QvN is partition regular over $\Q$.

Given a finite colouring of $\Q$ we seek a colour class $A$ and a $y \in A$ such that
 \[
   2^{-n} y \in nA-nA
 \]
for every $n \in \N$.  The idea is to view $\Q$ as infinitely many nested copies of $\Z$ and apply the methods of the previous section.

It is convenient to work inside the dyadic rationals $\dyadics = \bigcup_{j=0}^\infty (2^{-j} \cdot \Z)$.  We call the subset $2^{-j} \cdot \Z$ the $j$th \defn{level} of $\dyadics$.  For a set $S \subseteq \dyadics$, let
\[
 d_j(S) = \limsup_{n \to \infty} \frac{|S \cap (2^{-j} \cdot[n])|}{n}
\]
be the density of $S$ in the $j$th level, and let
\[
 d^*(S) = \limsup_{j \to \infty} d_j(S).
\]
If $\dyadics = A_1 \cup \dotsb \cup A_r$, then, for each $j$,
\[
 1 \leq d_j(A_1) + \dotsb + d_j(A_r),
\]
so
\[
 1 \leq d^*(A_1) + \dotsb + d^*(A_r),
\]
and $d^*(A_i) > 0$ for at least one $i$.

The proof follows the same pattern as before.

\begin{lemma} \label{eventually-Q}
 Let $A \subseteq \dyadics$ with $d^*(A) > 0$.  Then there is an $m \in\N$ such that, for 
$k\geq 4/d^*(A)$, $kA - kA \supseteq m\cdot\dyadics$.
\end{lemma}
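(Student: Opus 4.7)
Let $T = A - A \subseteq \mathbb{D}$. Then $T$ is symmetric, contains $0$, and $kT = kA - kA$. Since $d^*$ is invariant under dyadic translation and $T$ contains $A - a$ for any $a \in A$, we have $d^*(T) \geq d^*(A)$. My plan is to show that $kT$ is a subgroup of $\mathbb{D}$ for $k \geq 2/d^*(T)$ (in particular for $k \geq 4/d^*(A)$), and then that any subgroup of $\mathbb{D}$ with elements of arbitrarily high level must contain $m\mathbb{D}$ for some $m \in \mathbb{N}$.

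For the stabilisation step I would directly transcribe the proof of Lemma~\ref{eventually-symmetric}. The three density facts it relies on have exact analogues for $d^*$: (i) $d^*(S) \leq 1$ for every $S \subseteq \mathbb{D}$, since each $d_j(S) \leq 1$; (ii) $d^*$ is translation-invariant under dyadic shifts (a shift changes $|S \cap 2^{-j}[n]|$ by only $O_j(1)$, which vanishes on dividing by $n$); and (iii) for disjoint dyadic translates, $d^*\bigl(\bigcup_{i=1}^{\ell}(S + x_i)\bigr) = \ell \cdot d^*(S)$, because once $j$ exceeds the levels of all the $x_i$ the intra-level counts add exactly up to a boundary error of size $O_j(1)/n$. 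With these in hand, the argument of Lemma~\ref{eventually-symmetric} goes through verbatim: if $(2j)T \subsetneq (2j+1)T$ for every $j \leq 1/d^*(T)$, then $(2j+1)T$ contains $j+1$ disjoint translates of $T$, so $d^*((2j+1)T) \geq (j+1)d^*(T) > 1$ at $j = \lfloor 1/d^*(T)\rfloor$, a contradiction. Hence $(k+1)T = kT$ for $k \geq 2/d^*(T)$, and combined with $-kT = kT$ this shows that $kT$ is a subgroup of $\mathbb{D}$.

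Next I would observe that $kT$ contains elements of arbitrarily high level. If instead $T \subseteq 2^{-N}\mathbb{Z}$ for some fixed $N$, then for $j > N$ one checks that $d_j(T) = 2^{N-j}d_N(T) \to 0$, contradicting $d^*(T) > 0$. So $T$ contains elements $p/2^j$ with $p$ odd for arbitrarily large $j$, and adding $k$ copies of any such element produces an element of $kT$ at level $j - v_2(k)$, which is still unbounded.

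Finally, the structural claim: any subgroup $G \leq \mathbb{D}$ with elements of unbounded level contains $m\mathbb{D}$ for some $m \in \mathbb{N}$. Each intersection $G \cap 2^{-N}\mathbb{Z}$ is a cyclic subgroup of $2^{-N}\mathbb{Z}$, say $(m_N/2^N)\mathbb{Z}$, and the nesting forces $m_{N+1} \mid 2m_N$. Writing $m_N = 2^{a_N}u_N$ with $u_N$ odd, the sequence $(u_N)$ is divisibility-decreasing and hence eventually constant at some $u$. For each $N$ at which $G$ contains an element of level exactly $N$, the generator $m_N/2^N$ must itself have denominator $2^N$ in lowest terms, forcing $a_N = 0$ and $m_N = u$; since this holds for arbitrarily large $N$, $G \supseteq (u/2^N)\mathbb{Z}$ for arbitrarily large $N$, and so $G \supseteq u\mathbb{D}$. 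Applied with $G = kT$ this yields the required $m$. The main obstacle is ingredient (iii) above, the additivity of $d^*$ on disjoint translates, which requires a small but careful interchange of the two limsups defining $d^*$; once that is in place, everything else is a close transcription of the $\mathbb{Z}$ argument.
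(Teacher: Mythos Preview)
Your argument is correct, but it takes a genuinely different route from the paper's. The paper applies \lem{eventually-symmetric} as a black box \emph{level by level}: for each of the infinitely many $j$ with $d_j(A) > d^*(A)/2$ it obtains an $m_j$ with $kA_j - kA_j = 2^{-j}m_j\cdot\Z$, observes $m_j \le 2/d^*(A)$ so some value $m$ repeats infinitely often, and then takes the union of the nested subgroups $2^{-j}m\cdot\Z$ to get $m\cdot\dyadics$. You instead rerun the \lem{eventually-symmetric} argument \emph{globally} in $\dyadics$ with $d^*$ in place of $d$, conclude that $kT$ is an abstract subgroup of $\dyadics$, and then prove a small structure theorem for subgroups of $\dyadics$ containing elements of unbounded level. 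The paper's route is shorter and uses the $\Z$ case off the shelf, while yours avoids the per-level bookkeeping and the pigeonhole on the $m_j$, and in fact yields the slightly better threshold $k\ge 2/d^*(A)$ rather than $4/d^*(A)$; the price is re-verifying the density facts for $d^*$ and the subgroup classification. One small remark: your worry about ingredient~(iii) is overstated --- once $j$ exceeds the levels of all the $x_i$, the identity $d_j\bigl(\bigcup_i(S+x_i)\bigr)=\ell\,d_j(S)$ holds exactly (by the level-$j$ version of the paper's third bullet), and taking $\limsup_j$ of an eventual equality needs no interchange. Also, the observation that $kT$ has elements of unbounded level is immediate from $T\subseteq kT$ (since $0\in T$), so your $v_2(k)$ computation, while correct, is not needed.
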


\begin{proof}
 There are infinitely many levels $j$ such that $d_j(A) > d^*(A) / 2$.  
Let $j$ be one such level and let $A_j=A\cap 2^{-j}\cdot\Z$.  Then by
\lem{eventually-symmetric} inside level $j$, there is an 
$m_j \in \N$ such that, for $k \geq 4/d^*(A) > 2 / d_j(A)=2/d_j(A_j)$, 
\[
kA-kA \supseteq kA_j-kA_j= 2^{-j} m_j \cdot \Z\,.
\]
Then $\frac{1}{m_j}=d_j(2^{-j} m_j \cdot \Z)=d_j(kA_j-kA_j)\geq d_j(A_j)=d_j(A)$,
so $m_j \leq 1/d_j(A) \leq 2 / d^*(A)$. Thus some $m_j$ occurs infinitely often: call it $m$.  Choose an infinite set $J$ of levels  such that, for $k \geq 4 / d^*(A)$, $kA-kA \supseteq 2^{-j} m \cdot \Z$.  Then, for $k \geq 4 / d^*(A)$,
\begin{align*}
  kA-kA & \supseteq \bigcup_{j \in J} (2^{-j} m \cdot \Z) \\
        & = \bigcup_{j=0}^\infty (2^{-j} m \cdot \Z) \\
        & = m \cdot \dyadics,
\end{align*}
since the levels of $\dyadics$ are nested and $J$ is infinite.
\end{proof}

\begin{lemma} \label{long-homogeneous-AP-Q}
 Let $\dyadics = A \cup B$ where $d^*(B) = 0$.  Then, for any $l\in\N$, 
there exist $j$ and $d$ such that $2^{-j}d \cdot [l] \subseteq A$.
\end{lemma}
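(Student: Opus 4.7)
The plan is to reduce directly to the proof of \lem{long-homogeneous-AP} by working inside a single level of $\dyadics$. Since $d^*(B) = \limsup_j d_j(B) = 0$ and densities are non-negative, we in fact have $d_j(B) \to 0$. So I can choose $j$ large enough that $d_j(B) < 1/(2l^2)$. Unpacking the definition of $d_j$ as a $\limsup$, this guarantees an $n \geq l$ with
\[
 |B \cap (2^{-j} \cdot [n])| < \frac{n}{2l^2}.
\]

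With such a $j$ and $n$ fixed, I can now run the exact counting argument from \lem{long-homogeneous-AP}, but with the progressions $d \cdot [l]$ replaced by their scaled versions $2^{-j} d \cdot [l]$ sitting inside the $j$th level of $\dyadics$. Setting $D = \lfloor n/l \rfloor \geq n/(2l)$, the sets $2^{-j} d \cdot [l]$ for $d = 1, \dotsc, D$ all lie inside $2^{-j} \cdot [n]$, and each element of $2^{-j} \cdot [n]$ belongs to at most $l$ of them. If no such set were contained in $A$, every one would meet $B$, forcing
\[
 |B \cap (2^{-j} \cdot [n])| \geq \frac{D}{l} \geq \frac{n}{2l^2},
\]
which contradicts the choice of $n$. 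Hence some $d \leq D$ satisfies $2^{-j} d \cdot [l] \subseteq A$, as required.

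I do not anticipate any real obstacle: the only thing to check is that $d^*(B) = 0$ really does deliver a level in which $B$ is sparse enough for the Lemma~\ref{long-homogeneous-AP} counting argument to fire, and this is essentially immediate from the definition of $d^*$ once one notes that a non-negative sequence with $\limsup = 0$ converges to $0$. After that the proof is a direct transcription of the earlier one, scaled by $2^{-j}$.
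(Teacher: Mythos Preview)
Your proof is correct and follows essentially the same route as the paper's: pick a level $j$ with $d_j(B)$ small, extract an $n\geq l$ with $|B\cap(2^{-j}\cdot[n])|<n/(2l^2)$, and rerun the counting argument of \lem{long-homogeneous-AP} with $E=\bigcup_{d=1}^{D}2^{-j}d\cdot[l]$. The only cosmetic difference is your threshold $d_j(B)<1/(2l^2)$ versus the paper's $1/(4l^2)$; either suffices since a strict bound on the $\limsup$ already forces the tail inequality.
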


\begin{proof}
Since $d^*(B) = 0$ there is a $j$ such that $d_j(B) < \displaystyle\frac{1}{4l^2}$.  
Then there is an $n \geq l$ such that $|B \cap (2^{-j}\cdot[n])| < \displaystyle\frac{n}{2l^2}$.  
The rest of the proof is the same as that of \lem{long-homogeneous-AP} with
$E=\bigcup_{d=1}^D 2^{-j}d\cdot[l]$.
\end{proof}

\begin{theorem} \label{Q}
 Let $\dyadics = A_1 \cup \dotsb \cup A_r$ be an $r$-colouring of $\dyadics$.  Then there is a colour class $A_i$ containing a solution to \QvN.
\end{theorem}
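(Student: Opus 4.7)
The plan is to transcribe the proof of \thm{symmetric-case} to the dyadic setting, using \lem{eventually-Q} and \lem{long-homogeneous-AP-Q} in place of their integer counterparts and $d^*$ in place of $d$. Since $d^*(A_1)+\dotsb+d^*(A_r)\geq 1$, at least one colour class is $d^*$-dense, and the monochromatic solution to \QvN will be found in one such class.

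For each $d^*$-dense $A_i$ I apply \lem{eventually-Q} to obtain an integer $m_i$ and a threshold $K_i$ such that $kA_i - kA_i \supseteq m_i\cdot\dyadics$ whenever $k\geq K_i$. Setting $m=\operatorname{lcm}(m_i)$ and $K=\max K_i$ then gives $kA_i-kA_i\supseteq m\cdot\dyadics$ for every $d^*$-dense $A_i$ and every $k\geq K$. Let $P$ be the truncation of \QvN to its first $K-1$ equations. Then $P$ has the columns property: grouping the columns of the $2n$ variables $x_{n,\cdot},z_{n,\cdot}$ of the $n$th equation into a single block gives column-sum zero (equal numbers of $+e_n$ and $-e_n$), and putting $y$ alone in the final block places its column $(2^{-1},\dotsc,2^{-(K-1)})^T$ in the span of the preceding $e_1,\dotsc,e_{K-1}$. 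By Rado there is an $l$ such that any $r$-colouring of $[l]$ contains a monochromatic solution to $P$; by homogeneity, the same holds in any positive dilate $q\cdot[l]$.

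Now apply \lem{long-homogeneous-AP-Q} to the union of the $d^*$-null classes with parameter $ml$, obtaining $j$ and $d$ with $2^{-j}d\cdot[ml]$ disjoint from every $d^*$-null class. Its sub-progression $q\cdot[l]$, where $q=2^{-j}md$, lies inside $m\cdot\dyadics$, so applying Rado inside it yields some $d^*$-dense $A_i$ containing a monochromatic solution to $P$ with every value in $m\cdot\dyadics$. The value $y$ of this solution is therefore in $m\cdot\dyadics$, and since $\dyadics$ is closed under multiplication by $2^{-1}$ we get $2^{-n}y\in m\cdot\dyadics\subseteq nA_i - nA_i$ for every $n\geq K$; each such equation can then be solved inside $A_i$, producing a complete monochromatic solution to \QvN. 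The only place where the dyadic setting is genuinely used, rather than the integer one, is this last closure statement $2^{-1}\cdot(m\cdot\dyadics)=m\cdot\dyadics$; apart from that no new obstacle arises, and the argument is a careful port of \thm{symmetric-case}.
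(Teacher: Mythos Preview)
Your proof is correct and follows essentially the same route as the paper's own argument: apply \lem{eventually-Q} to the $d^*$-dense classes to get $m$ and $K$, use Rado on the first $K-1$ equations to get $l$, and then use \lem{long-homogeneous-AP-Q} to locate a progression $2^{-j}md\cdot[l]\subseteq m\cdot\dyadics$ avoiding the $d^*$-null classes. Your additional remarks---the explicit verification of the columns property and the observation that $2^{-1}\cdot(m\cdot\dyadics)=m\cdot\dyadics$ is what makes the dyadic setting work---are helpful elaborations rather than departures from the paper's approach.
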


\begin{proof}
 For each dense colour class $A_i$, apply \lem{eventually-Q} to obtain $m_i$ and $K_i$ such that, for $k \geq K_i$, $kA_i - kA_i \supseteq m_i \cdot \dyadics$.  Let $m$ be the least common multiple of the $m_i$, and let $K$ be the maximum of the $K_i$.  For every dense colour class $A_i$, and $k \geq K$, we have $kA_i - kA_i \supseteq m \cdot \dyadics$.
 
Write $P$ for the system consisting of the first $K-1$ equations of \QvN.  It is easy to check that $P$ satisfies the columns property, so by Rado's theorem it is partition regular.  It follows that there exists an $l$ such that whenever $[l]$, or any progression $2^{-j}c \cdot [l]$, is $r$-coloured it contains a monochromatic solution to $P$.

Apply \lem{long-homogeneous-AP-Q} to get $j$ and $d$ with $2^{-j}d \cdot [ml]$ disjoint from the colour classes with $d^*(A_i) = 0$.  Then $2^{-j}md \cdot [l] \subseteq 2^{-j}d \cdot [ml]$ is also disjoint from the non-dense colour classes, and by the choice of $l$ there is a dense colour class $A_i$ such that $A_i \cap \left(2^{-j}md \cdot [l]\right)$ contains a solution
\[
 \left\{y, x_{1,1}, \dotsc, z_{K-1,K-1}\right\}
\]
to $P$, where every element of the solution set is in $m \cdot \dyadics$.  Then, for $k \geq K$, $2^{-k} y \in kA_i - kA_i$, so this solution to $P$ can be extended to a solution for \QvN inside $A_i$.
\end{proof}

%%%%%%%%%%%%%%%%%%%%%%%%%%%%%%%%%%%%%%%%%%%%%%%%%%%%%%%%%%%%%%%%%%%%%%%%%%%%%%%%%%%%%%%%

\section{\asymmetric}\label{sec:asymmetric}

We now turn our attention to \asymmetric.  Given a finite colouring of $\N$ we seek a colour class $A$ and a $y \in A$ for which
 \[
   2^n y \in A-nA
 \]
for every $n \in \N$.  The next two lemmas generalise \lem{eventually-symmetric} to the asymmetric case.

\begin{lemma}\label{translated}
 Let $S$ be a dense subset of $\Z$ with $0 \in S$.  Then there is an $X \subseteq \Z$ such that, 
for any $k\geq 2/d(S)$, we have
$S-kS = X$.
\end{lemma}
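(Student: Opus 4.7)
My plan is to follow the pattern of the proof of \lem{eventually-symmetric}. Set $T_j := S - jS$. Two preliminary observations are immediate: because $0 \in S$, padding any representation $s_0 - s_1 - \cdots - s_j$ with an extra $-0$ gives $T_j \subseteq T_{j+1}$; and once the chain is constant at some index $j$ it stays so, as $T_{j+2} = T_{j+1} - S = T_j - S = T_{j+1}$. Hence it suffices to find some $j < 2/d(S)$ with $T_j = T_{j+1}$ and then take $X = T_j$.

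I argue by contradiction. Suppose $T_0 \subsetneq T_1 \subsetneq \cdots \subsetneq T_J$ is strict for $J = \lceil 1/d(S) \rceil$; I will build $J+1$ pairwise disjoint translates of $S$ inside $T_J$, forcing $d(T_J) \geq (J+1)d(S) > 1$, the desired contradiction. The translates are constructed inductively. Given $j$ pairwise disjoint translates $S + a_1, \ldots, S + a_j \subseteq T_{j-1}$, shifting them by $-s^\ast$ for any $s^\ast \in S$ yields $j$ disjoint translates $S + (a_i - s^\ast) \subseteq T_{j-1} - s^\ast \subseteq T_j$. Choose $x \in T_j \setminus T_{j-1}$ and write $x = s_0 - s_1 - \cdots - s_j$; taking $s^\ast = s_0$, the proposed new translate $S + (x - s_0) = S - s_1 - \cdots - s_j \subseteq T_j$ is disjoint from the others precisely when $x - a_i \notin S - S$ for each $i$.

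The crux---and, I expect, the main obstacle---is this disjointness check. In \lem{eventually-symmetric} the analogous step is immediate using $x \notin (2j)S$ together with $-s \in S$. Without symmetry, expanding a hypothetical relation $x - a_i = s' - s''$ with $a_i = \alpha - \beta_1 - \cdots - \beta_{j-1}$ only places $x$ in $2S - jS$, a set strictly larger than $T_{j-1}$, so no contradiction arises directly. To resolve this I would strengthen the inductive hypothesis, requiring each $a_i$ to admit a representation $a_i = -\beta_1^{(i)} - \cdots - \beta_{j-1}^{(i)}$ with no positive term, and exploit that extra structure to force the hypothetical equation to yield a representation of $x$ already in $T_{j-1}$.

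If this direct attack proves too fragile, a fallback is to apply \lem{eventually-symmetric} to the symmetric set $S - S$ (which contains $0$ and has density at least $d(S)$) to obtain an integer $m$ with $T_k - T_k = (k+1)(S - S) = m \cdot \Z$ for $k \geq 2/d(S) - 1$. Since $0 \in T_k$, this puts $T_k$ inside $m \cdot \Z$. One then shows that for $k \geq 2/d(S)$ the set $T_k$ is closed under adding and subtracting each $s \in S$: writing $t = s_0 - \tau$ with $\tau \in kS$, and using $kS - kS = m \cdot \Z$ to absorb an extra term, both $t + s$ and $t - s$ can be rewritten in $T_k$-form. Closure under $\pm S$ makes $T_k$ a union of cosets of $\langle S \rangle = m \cdot \Z$, which combined with $T_k \subseteq m \cdot \Z$ gives $T_k = m \cdot \Z$, the common value $X$.
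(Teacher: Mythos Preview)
You correctly identify the obstacle in the direct attack, but your proposed strengthening does not resolve it. With $a_i = -\beta_1^{(i)} - \cdots - \beta_{j-1}^{(i)}$, an intersection between $S + (x - s_0)$ and $S + (a_i - s_0)$ gives $t_0 + x = t' + a_i$ for some $t_0,t'\in S$, so
\[
x \;=\; t' - \beta_1^{(i)} - \cdots - \beta_{j-1}^{(i)} - t_0 \;\in\; S - jS \;=\; T_j,
\]
which is no contradiction: you only know $x \notin T_{j-1}$. The positive term $t'$ coming from the ``$S$'' in the old translate is unavoidable, and together with the stray $-t_0$ it consumes exactly one more negative slot than you have room for.

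Your fallback is circular at the key step. Closure of $T_k$ under $-s$ is precisely the inclusion $T_k - S \subseteq T_k$, i.e.\ $T_{k+1} \subseteq T_k$, which is what you are trying to prove. The identity $kS - kS = m\cdot\Z$ says only that every element of $m\cdot\Z$ arises as \emph{some} difference of two elements of $kS$; it does not say $kS + m\cdot\Z = kS$, so there is no mechanism to ``absorb'' the extra $\pm s$ into the $\tau$-part of a representation $t = s_0 - \tau$.

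The missing idea, and the paper's fix, is to step the index by two rather than one. One assumes $S-(2j)S \subsetneq S-(2j+1)S$ for every $j \leq 1/d(S)$ and shows inductively that $S-(2j+1)S$ contains $j+1$ disjoint translates of $S$. For the inductive step, pick $x \in \big(S-(2j+1)S\big) \setminus \big(S-(2j)S\big)$, write $x = s_0 - s_1 - \cdots - s_{2j+1}$, and verify that $S + (x - s_0)$ is disjoint from all of $\big(S-(2j-1)S\big) - s_0$ (which contains the shifted old translates): an intersection would give $t_0 + x \in S - (2j-1)S$, hence
\[
x \in S - (2j-1)S - t_0 \subseteq S - (2j)S,
\]
contradicting the choice of $x$. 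The two-step jump is exactly what leaves room for the stray $-t_0$ on the negative side.
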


\begin{proof}
 As in the proof of \lem{eventually-symmetric}, 
we suppose to the contrary that $S-(2j)S \subsetneq S-(2j+1)S$
for all $j \leq 1/d(S)$ and show that $S-(2j+1)S$ contains
$j+1$ disjoint translates of $S$ for each such  $j$, which is impossible for $j=\lfloor\frac{1}{d(S)}\rfloor$.

The claim is true for $j=0$.  For $j>0$, choose $x \in \big(S-(2j+1)S\big) \setminus \big(S-(2j)S\big)$.  
Then $x=s_0 - s_1 - \dotsb - s_{2j+1}$ with $s_i \in S$ for each $i$.  We have
\begin{align*}
 S + x - s_0 & \subseteq S - (2j+1)S, \hbox{ and}\\
 S - (2j-1)S - s_0 & \subseteq  S - (2j)S \subseteq S - (2j+1)S,
\end{align*}
hence it suffices to show that $S + x - s_0$ and $S - (2j-1)S - s_0$ are disjoint.
But if they intersect then $t_0 + x - s_0 = t_1 - t_2 - \dotsb - t_{2j} - s_0$ for
some $t_i\in S$, whence $x = t_1 - t_2 - \cdots - t_{2j} - t_0 \in S - (2j)S$, contradicting the choice of $x$.
\end{proof}
What can we say about $X$?
\begin{lemma}\label{cosets}
 Let $A$ be a dense subset of $\N$.  Then there is an $m$ such that, for $k\geq 2/d(A)$,
$A-kA$ is a union of cosets of $m \cdot \Z$.
\end{lemma}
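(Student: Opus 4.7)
The plan is to pass to the translate $S := A - a_0$ with $a_0 = \min A$, so that $0 \in S$ and $d(S) = d(A)$. A short substitution gives the identity $A - kA = (S - kS) + (1-k)a_0$, so it suffices to prove the coset property for $X := S - kS$, which by \lem{translated} is well-defined (independent of $k$) once $k \geq 2/d(A)$.

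The modulus $m$ is produced by applying \lem{eventually-symmetric} to the symmetric set $S - S$ (which contains $0$ and has density at least $d(A)$): there is $m \in \N$ with $kS - kS = m \cdot \Z$ for all $k$ sufficiently large. Using $0 \in S$, we get $S \subseteq S - S \subseteq m \cdot \Z$, and hence $X \subseteq m \cdot \Z$. Moreover $\gcd S = m$, since $m$ divides every element of $S$, while $\gcd S$ divides every element of $S - S$ and so divides $m$. In particular $A - kA$ already lies in a single coset of $m \cdot \Z$, and the remaining task is to show that this coset is filled, i.e.\ that $X \supseteq m \cdot \Z$.

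For this main step I appeal to the classical Sylvester--Frobenius fact that if $T \subseteq \N$ has $\gcd T = 1$, then the submonoid of $\{0\} \cup \N$ it generates is cofinite. Applied to $(S \setminus \{0\})/m$, which has gcd $1$, this produces an $N$ such that every multiple of $m$ at least $mN$ lies in $\bigcup_{j \geq 0} jS$. Given any $y \in m \cdot \Z$, I pick $s \in S$ with $s \geq mN + |y|$ (possible since $S$ is infinite) and set $\sigma := s - y$; then $\sigma \in m \cdot \Z$ and $\sigma \geq mN$, so $\sigma \in j_0 S$ for some $j_0 \geq 0$, giving $y = s - \sigma \in S - j_0 S$. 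Because $0 \in S$ provides the monotonicity $S - jS \subseteq S - (j+1)S$, and because $S - jS = X$ once $j \geq 2/d(A)$, we conclude $y \in X$ whether $j_0$ is small or large.

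Hence $X = m \cdot \Z$, and therefore $A - kA = (1-k)a_0 + m \cdot \Z$ is a single coset of $m \cdot \Z$, which is certainly a union of cosets. The main obstacle is precisely the inclusion $X \supseteq m \cdot \Z$: stabilisation only yields the one-sided closure $X - S \subseteq X$, and Sylvester--Frobenius is what converts this one-sided closure into the required two-sided periodicity modulo $m$.
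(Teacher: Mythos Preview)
Your argument is correct and in fact proves more than stated: you show that $A - kA$ is a \emph{single} coset $(1-k)a_0 + m\cdot\Z$, not merely a union of cosets. The route, however, is genuinely different from the paper's. The paper does not fix one translate; instead it applies \lem{translated} to $A - a$ for \emph{every} $a \in A$, obtaining $(A-a)-k(A-a)=(A-a)-(k+1)(A-a)$ and hence $X = X - A + a$ for each such $a$, so that $X = X + (A-A)$. Iterating gives $X = X + l(A-A)$ for all $l$, and since $l(A-A) = m\cdot\Z$ for large $l$ by \lem{eventually-symmetric}, one gets $X = X + m\cdot\Z$ immediately. Thus the paper's proof stays entirely within the two preceding lemmas, whereas yours imports the numerical-semigroup (Sylvester--Frobenius) fact to pin down $X$ exactly as $m\cdot\Z$; the payoff is the sharper single-coset conclusion, which the paper does not record but which also follows from its argument once one observes that $A-A \subseteq m\cdot\Z$ forces $A$ into a single residue class modulo $m$.
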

\begin{proof}
Let $k \geq 2/d(A)$, and
let $X=A - kA$. For any $a \in A$, we have by \lem{translated} that
\[ 
(A-a) - k(A-a) = (A-a)-(k+1)(A-a),
\]
and so 
\[
X=X-A+a.
\]
Since $a \in A$ was arbitrary it follows that
$X=X+A-A$, whence $X=X+l(A-A)$ for all $l$. Taking $l$ sufficiently large, by \lem{eventually-symmetric} there is an $m \in \Z$ such that $X = X + m \cdot \Z$.  Thus $X$ is a union of cosets of $m \cdot \Z$.
\end{proof}

The example of the odd numbers shows that it is not necessarily the case that $A-kA \supseteq m\cdot\Z$ for
large $k$.  However, the obstruction is clear: in that case $A$ does not contain even a single 
element of $m \cdot \Z$.  But then we can pass down to a copy of $\N$ coloured with one fewer 
colour and apply induction.  Combining this idea with the argument for \symmetric concludes 
the proof that \asymmetric is partition regular.

\begin{theorem} \label{asymmetric-case}
 Let $\N = A_1 \cup \dotsb \cup A_r$ be an $r$-colouring of $\N$.  Then there is a colour class $A_i$ containing a solution to \asymmetric.
\end{theorem}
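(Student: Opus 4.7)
The plan is to follow the strategy of \thm{symmetric-case}, replacing \lem{eventually-symmetric} by \lem{cosets}. The new difficulty is that \lem{cosets} tells us only that $A_i-kA_i$ is a union of cosets of $m_i\cdot\Z$, not that it contains $m_i\cdot\Z$ outright, so we must ensure that the elements $2^k y$ we seek actually lie in cosets of $A_i-kA_i$ that are present.

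For each dense colour class $A_i$, apply \lem{cosets} to obtain $m_i$ and a threshold $K_i$ such that $A_i-kA_i$ is a union of cosets of $m_i\cdot\Z$ for every $k\geq K_i$. Let $m$ be the least common multiple of the $m_i$ and $K=\max K_i$. The key observation is that if $y\in A_i$ is divisible by $m$ then $2^k y\in A_i-kA_i$ for every $k\geq K$. Indeed, the trivial decomposition $(1-k)y=y-(y+\dotsb+y)$ shows that $(1-k)y\in A_i-kA_i$, and since $m\mid y$ this element lies in the zero coset of $m_i\cdot\Z$. So the zero coset is one of the cosets making up $A_i-kA_i$, whence $m_i\cdot\Z\subseteq A_i-kA_i$; as $2^k y\in m\cdot\Z\subseteq m_i\cdot\Z$, this gives $2^ky\in A_i-kA_i$.

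With this observation in hand the rest of the proof is essentially a copy of the proof of \thm{symmetric-case}. Let $P$ be the subsystem of the first $K-1$ equations of \asymmetric. One checks $P$ has the columns property by pairing one $x_{n,1}$ with $z_n$ for each $n=1,\dotsc,K-1$ (giving a first block of columns summing to zero), and placing $y$ together with the remaining $x_{n,i}$ in a second block whose column-sum lies in the span of the first. Rado's theorem then gives an $l$ such that every $r$-colouring of any progression $c\cdot[l]$ contains a monochromatic $P$-solution. \lem{long-homogeneous-AP} supplies $d$ with $d\cdot[ml]$ disjoint from the non-dense colour classes, and since $md\cdot[l]\subseteq d\cdot[ml]$ we obtain a $P$-solution $\{y,x_{1,1},\dotsc,z_{K-1}\}$ in some dense $A_i$ with every variable divisible by $md$, hence by $m$. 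The key observation then extends this to a full solution of \asymmetric inside $A_i$ by choosing, for each $k\geq K$, elements $a,a_1,\dotsc,a_k\in A_i$ with $a-(a_1+\dotsb+a_k)=2^ky$ and setting $z_k=a$, $x_{k,j}=a_j$.

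The main difficulty lies in the key observation: unlike in the symmetric case, \lem{cosets} by itself does not reveal which cosets of $m\cdot\Z$ appear in $A_i-kA_i$. An alternative route would be to induct on the number of colours, using the fact that a dense colour class $A_i$ with $A_i\cap m\cdot\N=\emptyset$ disappears when we pass to the subcopy $m\cdot\N$ of $\N$; the direct argument above instead uses divisibility by $m$ at the Rado step to force the colour class of $y$ to meet $m\cdot\N$, sidestepping the need for induction.
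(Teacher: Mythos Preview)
Your proof is correct, and in fact slightly more streamlined than the paper's. The paper begins with a case split: if some colour class $A_i$ misses $m\cdot\Z$ entirely, it inducts on $r$ by passing to $m\cdot\N$; otherwise every colour class meets every subgroup, and in particular each dense $A_i$ contains a multiple of $m$, which forces $m\cdot\Z$ to be one of the cosets comprising $A_i-kA_i$. You observe instead that the Rado step already places the specific element $y$ in $A_i\cap m\cdot\Z$, and that this single witness is enough to pin down the zero coset via $(1-k)y\in A_i-kA_i$. This is exactly the same mechanism the paper uses in its second case, but applied after the Rado step rather than before, so the induction becomes unnecessary. What the paper's route buys is a clean conceptual reduction (and the remark that the solution always lands in a dense class follows after a short extra observation about density under dilation); what your route buys is a one-pass argument with no case analysis. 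You even flag the inductive alternative yourself, so there is nothing to add.
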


\begin{proof}
 Suppose first that there is an $m$ and an $i$ such that $A_i$ is disjoint from $m\cdot\Z$.  Then $m \cdot \N$ is $(r-1)$-coloured by the remaining colour classes, so by induction on $r$ we can find a monochromatic solution to \asymmetric 
inside $m \cdot \N$.

Otherwise we may assume that every colour class meets every subgroup of $\Z$.
Apply \lem{cosets} to each dense colour class $A_i$ to obtain $m_i$ and $K_i$ such that,
for $k \geq K_i$, $A_i - kA_i$ is a union of cosets of $m_i \cdot \Z$.
Let $m$ be the least common multiple of the $m_i$, and let $K$ be the maximum of the $K_i$.  Then, for every dense colour class $A_i$, and 
$k \geq K$, $A_i - kA_i$ is a union of cosets of $m \cdot \Z$.  Since every colour class contains a multiple of $m$, one of those cosets is $m \cdot \Z$ itself.

 Write $P$ for the system consisting of the first $K-1$ equations of \asymmetric.  Again, it is easy to check that $P$ satisfies the columns property, so by Rado's theorem it is partition regular.  It follows that there exists an $l$ such that whenever $[l]$, or a progression $c \cdot [l]$, is $r$-coloured it contains a monochromatic solution to $P$.

Apply \lem{long-homogeneous-AP} to get $d$ with $d \cdot [ml]$ disjoint from the non-dense colour classes.  Then $md \cdot [l] \subseteq d \cdot [ml]$ is also disjoint from the non-dense colour classes, and by the choice of $l$ there is a dense colour class $A_i$ such that $A_i \cap \left(md \cdot [l]\right)$ contains a solution
\[
 \left\{y, x_{1,1}, \dotsc, z_{K-1}\right\}
\]
to $P$, where every element of the solution set is divisible by $m$. Then, for $k \geq K$, $2^k y \in A_i - kA_i$, so this solution to $P$ can be extended to a solution for \asymmetric inside $A_i$.
\end{proof}

 Noting that a dense subset of $m \cdot \N$ is dense in $\N$, we see that the proof actually shows that there is a solution inside a dense colour class.  Note also that, as for \thm{symmetric-case}, the proof of \thm{asymmetric-case} works if the sequence $(2^n)$ is replaced by any other integer sequence.

%%%%%%%%%%%%%%%%%%%%%%%%%%%%%%%%%%%%%%%%%%%%%%%%%%%%%%%%%%%%%%%%%%%%%%%%%%%%%%%%%%%%%%%%

Until now we have considered \defn{kernel partition regularity}.  That is, given a colouring of $\N$ we have sought monochromatic kernel vectors (solutions to $Ax = 0$).  There is a corresponding notion of \defn{image partition regularity}: given a colouring of $\N$ and a matrix $A$ with rational entries (and only finitely many non-zero entries in each row), can we find an $x$ such that the image vector $Ax$ is monochromatic?  (Note that we do \emph{not} care about the colours of the entries of $x$.)  For example, the length 4 version of van der Waerden's theorem is the assertion
that the following matrix is image partition regular.
\[
\begin{pmatrix}
 1 & 0 \\
 1 & 1 \\
 1 & 2 \\
 1 & 3
\end{pmatrix}
\]

Just as kernel partition regularity can be thought of as a statement about monochromatic solutions to sets of \emph{equations}, image partition regularity can be thought of as a statement about monochromatic sets of \emph{expressions}.  For example, consider the set of expressions
\[ \renewcommand*{\arraystretch}{1.5}
 \setlength{\arraycolsep}{2em}
 \begin{array}{rcc}
                     x_{1,1} + 2y  & x_{1,1} & y \\
           x_{2,1} + x_{2,2} + 4y  & x_{2,1} & \\
 x_{3,1} + x_{3,2} + x_{3,3} + 8y  & x_{2,2} & \\
 \mathrel{\makebox[\widthof{$8y + 8y$}]{\vdots}}                            & \vdots &
 \end{array}
\]
consisting of the left hand sides of the equations of \asymmetric and the variables they contain.  This set of expressions is image partition regular because for any values of the variables the expressions form a solution set for \asymmetric.  Call this image partition regular system \SystemI.

We call a matrix $A$ \defn{image partition regular over $\R$ near zero} if, for all $\delta > 0$, whenever $(-\delta, \delta)\setminus\{0\}$ is finitely coloured there is a vector $x$ with real entries such that $Ax$ is monochromatic and contained in $(-\delta, \delta)\setminus\{0\}$.  In \cite[Question 3.10]{DH}, De and Hindman asked whether there is a system which is image partition regular over $\N$ but is not image partition regular over $\R$ near zero.  
We now show 
that \SystemI is such a system. This is of particular interest because it
is the `final link' which establishes that a 
diagram of implications in \cite{DH} involving nineteen properties does 
not have any 
missing implications.  

\begin{theorem}\label{iprnz} \SystemI is image partition regular over $\N$ but not image partition regular over $\R$ near zero.
\end{theorem}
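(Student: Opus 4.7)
The approach splits cleanly into two independent parts corresponding to the two halves of the statement.

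For image partition regularity over $\N$, the plan is to reduce directly to \thm{asymmetric-case}. Given a finite colouring of $\N$, that theorem produces a monochromatic kernel solution $\{y, x_{n,i}, z_n\}$ to \asymmetric lying in a single colour class $A_i$. The entries of the image vector of \SystemI, evaluated at these $(y, x_{n,i})$, are of three kinds: the value $y$; the individual values $x_{n,i}$; and the combined sums $x_{n,1} + \cdots + x_{n,n} + 2^n y$. The first two kinds lie in $A_i$ by construction, and each combined sum equals the corresponding $z_n \in A_i$ by the defining equations of \asymmetric. Hence every entry of the image lies in $A_i$, as required.

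For the second half I would prove the stronger statement that, for every $\delta > 0$, there is no real vector whatsoever whose image under \SystemI lies entirely in $(-\delta, \delta) \setminus \{0\}$; the trivial one-colouring of $(-\delta, \delta) \setminus \{0\}$ then already witnesses the failure. Assuming such a vector existed, we would have $y \neq 0$ with $|y| < \delta$ and each $|x_{n,i}| < \delta$, so $|x_{n,1} + \cdots + x_{n,n}| < n\delta$. Combining with $|x_{n,1} + \cdots + x_{n,n} + 2^n y| < \delta$ and the triangle inequality gives $|2^n y| < (n+1)\delta$, i.e.\ $|y| < (n+1)\delta / 2^n$, for every $n \in \N$. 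Letting $n \to \infty$ forces $y = 0$, contradicting $y \in (-\delta, \delta) \setminus \{0\}$.

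There is no real obstacle in either half; the argument is essentially a bookkeeping consequence of \thm{asymmetric-case} together with a one-line estimate. The conceptual point worth highlighting is that the unboundedness of the coefficient of $y$---the very feature that motivated the system and required the new ideas of Sections \ref{sec:symmetric} and \ref{sec:asymmetric}---is also precisely what obstructs a monochromatic image near zero: any nonzero $y$ eventually has $2^n|y|$ exceeding $(n+1)\delta$, a deficit that at most $n$ terms each bounded by $\delta$ cannot cancel. So the separation between image partition regularity over $\N$ and over $\R$ near zero comes essentially for free once the kernel version is in hand.
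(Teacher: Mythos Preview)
Your proposal is correct. The first half is exactly the paper's argument: \SystemI is image partition regular over $\N$ because any monochromatic kernel solution to \asymmetric (supplied by \thm{asymmetric-case}) makes all the listed expressions monochromatic.

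For the second half you take a slightly different route. The paper uses the two-colouring of $(-\delta,\delta)\setminus\{0\}$ by sign: if the image is monochromatic then $y$ and all $x_{n,i}$ have the same sign, say positive, and then $x_{n,1}+\cdots+x_{n,n}+2^n y > 2^n y > \delta$ for large $n$ with no further estimate needed. You instead prove the stronger fact that even the trivial one-colouring fails, via the triangle-inequality bound $|y| < (n+1)\delta/2^n \to 0$. Both arguments are short and valid; the paper's sign colouring buys a cleaner one-line inequality by ruling out cancellation, while yours yields a nominally stronger conclusion (no image vector lands in the interval at all) at the cost of a small extra computation.
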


\begin{proof}
We have already seen that \SystemI is image partition regular over $\N$.  To see that it is not image partition regular over $\R$ near zero choose $\delta > 0$ and partition $(-\delta, \delta)\setminus\{0\}$ as $(-\delta, 0) \cup (0, \delta)$.  If \SystemI is to be monochromatic then $y$ and each of the $x_{i,j}$ must lie in the same part: without loss of generality they are all positive.  But then
\[
 x_{n,1} + \cdots + x_{n,n} + 2^n y > 2^n y > \delta
\]
for $n$ sufficiently large.
\end{proof}

\bibliographystyle{plain}

\end{document}